\definecolor{dblue}{rgb}{0,0,0.45}
\definecolor{red}{rgb}{0.7,0,0}
\newlength\longest
\newcommand{\cM}{{\mathcal M}}
\newcommand{\N}{{\mathbb N}}
\newcommand{\R}{{\mathbb R}}
\begin{document}

\newtheorem{thm}{Theorem}[section]
\newtheorem{lem}[thm]{Lemma}
\newtheorem{prop}[thm]{Proposition}
\newtheorem{cry}[thm]{Corollary}
\theoremstyle{definition}
\newtheorem{dfn}[thm]{Definition}
\newtheorem{example}[thm]{Example}
\theoremstyle{remark}
\newtheorem{rem}[thm]{Remark}
\numberwithin{equation}{section}

\title{Proper Inclusions of Morrey Spaces}
\author{Hendra Gunawan, Denny I. Hakim, and Mochammad Idris}
\affil{Department of Mathematics, Bandung Institute of Technology\\
Bandung 40132, Indonesia}
\date{}

\maketitle

\begin{abstract}
In this paper, we prove that the inclusions between Morrey spaces, between weak
Morrey spaces, and between a Morrey space and a weak Morrey space are all proper.
The proper inclusion between a Morrey space and a weak Morrey space is established
via the unboundedness of the Hardy-Littlewood maximal operator on Morrey spaces
of exponent 1. In addition, we also give a necessary condition for each inclusion.
Our results refine previous inclusion properties studied in \cite{GHLM}.

\bigskip
\noindent{\bf MSC (2010):} 42B35, 46E30 \\
\noindent{\bf Keywords}: Morrey spaces, weak Morrey spaces, inclusion properties.

\end{abstract}

\section{Introduction}

Morrey spaces were first introduced by C.B. Morrey in \cite{Morrey} in relation
to the study of the solution of certain elliptic partial differential equations.
For $1 \le p \le q < \infty$, the {\it Morrey space} $\cM^p_q=\cM^p_q(\R^d)$ is
defined to be the set of all $f\in L^p_{\rm loc}(\R^d)$ such that
\[
\|f\|_{\cM^p_q}:=\sup_{a \in \R^d,\ r > 0}\ |B(a,r)|^{\frac{1}{q}}
\left( \frac{1}{|B(a,r)|}\int_{B(a,r)} |f(y)|^p\ dy\right)^{\frac{1}{p}}<\infty.
\]
Here, $B(a,r)$ is an open ball centered at $a$ with radius $r$, and $|B(a,r)|$
denotes its Lebesgue measure. Notice that, when $p=q$, one can recover the Lebesgue
space $L^p=L^p(\R^d)$ as the special case of $\cM^p_q$. See \cite{Peet} for
various spaces related to Morrey spaces. Many researchers have proved the
boundedness of classical integral operators on Morrey spaces and their
generalizations. See, for instance, \cite{Chiarenza, Gunawan} and the references
therein.

Concerning the Hardy-Littlewood maximal operator (defined in Section 3), one may prove
its boundedness on Morrey spaces using the inclusion $\cM_q^p \subseteq \cM^1_q$.
In general, we have the following inclusions of Morrey spaces
\begin{align*}
L^q = \cM^{q}_q \subseteq \cM^{p_2}_q \subseteq \cM^{p_1}_q \subseteq \cM^1_q
\end{align*}
provided that $1\le p_1\le p_2\le q<\infty$.
These inclusions may be obtained by applying H\"older's inequality. Note that, for
$1\le p_2<q<\infty$, we have $f(x):=|x|^{-\frac{d}{q}} \in\cM^{p_2}_q \setminus \cM^{q}_q$. This
tells us that the inclusion $\cM^{q}_q \subseteq \cM^{p_2}_q$ is proper
for $1\le p_2<q<\infty$.

Besides the `strong' Morrey spaces, we also have weak Morrey spaces whose definitions
are given as follows:

\bigskip

\begin{dfn}
Let $1\le p\le q<\infty$. A measurable functions $f$ on $\R^d$ is said to belong
to the {\it weak Morrey space} $w\cM^p_q=w\cM^p_q(\R^d)$ if the quasi-norm
\[
\|f\|_{w\cM^p_q}:=
\sup_{\gamma>0}
\|\gamma\chi_{\{|f|>\gamma\}}\|_{\cM^p_q}
\]
is finite.
\end{dfn}

Note that, by using the inequality $\gamma \chi_{\{|f|>\gamma\}} \le |f|$ for every
$\gamma>0$, we have $\cM^p_q\subseteq w\cM^p_q$. The inclusion properties of weak
Morrey spaces, generalized Morrey spaces, generalized weak Morrey spaces, and their
necessary conditions were discussed in \cite{GHLM}. In particular, for the case of
Morrey spaces and weak Morrey spaces, the results can be stated as follows:

\bigskip

\begin{thm}\label{thm:GHLM}{\rm \cite{GHLM}}
For $1\le p_1\le p_2\le q<\infty$, the following inclusion holds:
\begin{align*}
	w\cM^{p_2}_q \subseteq w\cM^{p_1}_q.
\end{align*}
Further, if $p_1<p_2$, then
\begin{align*}
w\cM^{p_2}_q \subseteq \cM^{p_1}_q.
\end{align*}
\end{thm}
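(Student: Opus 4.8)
The plan is to prove the two inclusions separately, both by direct estimation of the relevant (quasi-)norms on an arbitrary ball $B(a,r)$. For the first inclusion $w\cM^{p_2}_q \subseteq w\cM^{p_1}_q$ with $1 \le p_1 \le p_2 \le q < \infty$, I would fix $\gamma > 0$ and a ball $B(a,r)$, and observe that the function $\gamma\chi_{\{|f|>\gamma\}}$ is simply a multiple of a characteristic function, so its local $L^{p_1}$ and $L^{p_2}$ averages over $B(a,r)$ are directly comparable. Specifically, writing $E = \{|f|>\gamma\} \cap B(a,r)$, one has
\[
\left(\frac{1}{|B(a,r)|}\int_{B(a,r)} (\gamma\chi_{\{|f|>\gamma\}})^{p_1}\right)^{1/p_1}
= \gamma \left(\frac{|E|}{|B(a,r)|}\right)^{1/p_1},
\]
and similarly with $p_1$ replaced by $p_2$. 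Since $|E|/|B(a,r)| \le 1$ and $1/p_1 \ge 1/p_2$, we get $(|E|/|B(a,r)|)^{1/p_1} \le (|E|/|B(a,r)|)^{1/p_2}$, hence $\|\gamma\chi_{\{|f|>\gamma\}}\|_{\cM^{p_1}_q} \le \|\gamma\chi_{\{|f|>\gamma\}}\|_{\cM^{p_2}_q}$ pointwise in $\gamma$; taking the supremum over $\gamma>0$ finishes it. (Alternatively, one could invoke H\"older's inequality in the form $\cM^{p_2}_q \subseteq \cM^{p_1}_q$ applied to the truncated function, but the characteristic-function computation is cleaner.)

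For the second, strict-index inclusion $w\cM^{p_2}_q \subseteq \cM^{p_1}_q$ when $p_1 < p_2$, the key idea is the classical layer-cake / distribution-function decomposition that converts a weak bound into a strong one when there is room between the exponents. Fix $f \in w\cM^{p_2}_q$ and a ball $B = B(a,r)$. I would split
\[
\int_B |f(y)|^{p_1}\,dy = p_1\int_0^\infty \gamma^{p_1-1}\,|\{y\in B: |f(y)|>\gamma\}|\,d\gamma,
\]
and bound $|\{y\in B: |f(y)|>\gamma\}|$ in two ways: trivially by $|B|$ for small $\gamma$, and by the weak-Morrey control $|\{|f|>\gamma\}\cap B| \le |B|\,\bigl(\gamma^{-1}|B|^{-1/q}\|f\|_{w\cM^{p_2}_q}\bigr)^{p_2}$ for large $\gamma$ — this last bound comes from unpacking the definition of $\|\cdot\|_{w\cM^{p_2}_q}$ on the ball $B$. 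Choosing the splitting threshold $\gamma_0$ to balance the two regimes (the natural choice being $\gamma_0 = |B|^{-1/q}\|f\|_{w\cM^{p_2}_q}$, up to constants) makes the integral over $\gamma > \gamma_0$ converge precisely because $p_1 < p_2$. Carrying out the two elementary integrals and collecting the powers of $|B|$, one should land on $|B|^{1/q}\bigl(|B|^{-1}\int_B |f|^{p_1}\bigr)^{1/p_1} \le C\,\|f\|_{w\cM^{p_2}_q}$ with $C$ depending only on $p_1,p_2$; taking the supremum over all balls gives $f \in \cM^{p_1}_q$.

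The main obstacle — really the only delicate point — is bookkeeping the exponents of $|B|$ in the layer-cake computation so that the final estimate is genuinely scale-invariant (i.e. the $|B|$-powers cancel to leave exactly $|B|^{1/q}$ outside), and verifying that the threshold $\gamma_0$ is chosen so that both partial integrals are finite and of the right homogeneity; the convergence of the tail integral $\int_{\gamma_0}^\infty \gamma^{p_1-1-p_2}\,d\gamma$ is exactly where the strict inequality $p_1<p_2$ is used and must not be glossed over. Everything else is routine. I would also remark in passing that the example $f(x)=|x|^{-d/q}$ noted in the introduction already shows $w\cM^{p_2}_q \subseteq \cM^{p_1}_q$ can fail when $p_1 = p_2$ (since that $f$ lies in $\cM^{p}_q \subseteq w\cM^{p}_q$ but not in $L^q = \cM^q_q$ for the endpoint), so the hypothesis $p_1 < p_2$ is essential and not merely an artifact of the proof.
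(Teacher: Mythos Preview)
The paper does not actually prove this theorem: it is quoted from \cite{GHLM} and stated without proof, so there is no ``paper's own proof'' to compare against. Your plan is the standard one and is correct. The first inclusion is immediate from the observation that the Morrey norm of a characteristic function depends only on the measure of the set (your computation $(|E|/|B|)^{1/p_1}\le(|E|/|B|)^{1/p_2}$ is exactly right). For the second inclusion the layer-cake splitting at the threshold $\gamma_0=|B|^{-1/q}\|f\|_{w\cM^{p_2}_q}$ works as you describe: the tail integral $\int_{\gamma_0}^\infty \gamma^{p_1-1-p_2}\,d\gamma$ converges precisely because $p_1<p_2$, and the bookkeeping gives $\int_B|f|^{p_1}\le C\,|B|^{1-p_1/q}\|f\|_{w\cM^{p_2}_q}^{p_1}$, which is the desired bound.

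One small comment: your closing remark about $|x|^{-d/q}$ is slightly garbled. That function only witnesses the failure of $w\cM^{p_2}_q\subseteq\cM^{p_1}_q$ at the endpoint $p_1=p_2=q$ (it lies in $wL^q\setminus L^q$); for $p_1=p_2<q$ it does not serve as a counterexample since it belongs to $\cM^{p_1}_q$. The genuine proper inclusion $\cM^p_q\subsetneq w\cM^p_q$ for $p<q$ is exactly the content of Theorem~\ref{thm-300516-1} in the paper and requires a different (nonconstructive) argument via the unboundedness of the maximal operator on $\cM^1_{q/p}$. This is only a side remark, though, and does not affect the validity of your proof of the stated inclusions.
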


In addition to the above inclusion relations of Morrey spaces, we have the
following theorems.

\bigskip

\begin{thm}\label{thm-300716}
Let $1\le p_1<p_2<q<\infty$. Then each of the following inclusions is proper:
\begin{enumerate}
\item[{\rm (i)}] $\cM^{p_2}_q \subseteq  \cM^{p_1}_q$;
\item[{\rm (ii)}] $w\cM^{p_2}_q \subseteq \cM^{p_1}_q$;
\item[{\rm (iii)}] $w\cM^{p_2}_q \subseteq w\cM^{p_1}_q$.
\end{enumerate}
\end{thm}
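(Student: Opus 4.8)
The plan is to derive all three properness statements from a single one. Observe first that the inclusions in (i)--(iii) already hold: $\cM^{p_2}_q\subseteq\cM^{p_1}_q$ is the H\"older inclusion recalled in the introduction, and $w\cM^{p_2}_q\subseteq\cM^{p_1}_q$, $w\cM^{p_2}_q\subseteq w\cM^{p_1}_q$ are part of Theorem~\ref{thm:GHLM} (valid since $p_1<p_2$); so only properness is at stake. Now if we can exhibit one function $f_0\in\cM^{p_1}_q\setminus w\cM^{p_2}_q$, then: since $\cM^{p_2}_q\subseteq w\cM^{p_2}_q$ we get $f_0\in\cM^{p_1}_q\setminus\cM^{p_2}_q$, proving (i); since $\cM^{p_1}_q\subseteq w\cM^{p_1}_q$ we get $f_0\in w\cM^{p_1}_q\setminus w\cM^{p_2}_q$, proving (iii); and (ii) is then immediate. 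So the whole theorem reduces to producing such an $f_0$.

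The next step is to reduce the existence of $f_0$ to the case $p_1=1$. The nonlinear map $f\mapsto|f|^{p_1}$ carries $\cM^{p_1}_q$ onto the nonnegative part of $\cM^1_{q/p_1}$ and $w\cM^{p_2}_q$ onto the nonnegative part of $w\cM^{p_2/p_1}_{q/p_1}$, by virtue of the identities $\big\||f|^{p_1}\big\|_{\cM^1_{q/p_1}}=\|f\|_{\cM^{p_1}_q}^{p_1}$ and $\big\||f|^{p_1}\big\|_{w\cM^{p_2/p_1}_{q/p_1}}=\|f\|_{w\cM^{p_2}_q}^{p_1}$ (the latter using $\{|f|>\gamma\}=\{|f|^{p_1}>\gamma^{p_1}\}$ together with $\|\chi_E\|_{\cM^{p_2}_q}=\|\chi_E\|^{1/p_1}_{\cM^{p_2/p_1}_{q/p_1}}$). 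Consequently $\cM^{p_1}_q\setminus w\cM^{p_2}_q\ne\varnothing$ if and only if $\cM^1_{\tilde q}\setminus w\cM^{\tilde p}_{\tilde q}\ne\varnothing$, where $\tilde p:=p_2/p_1$ and $\tilde q:=q/p_1$ still satisfy $1<\tilde p<\tilde q<\infty$. Thus it suffices to show $\cM^1_{\tilde q}\ne w\cM^{\tilde p}_{\tilde q}$ for arbitrary exponents $1<\tilde p<\tilde q<\infty$.

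For this last point I would argue by contradiction, bringing in the Hardy--Littlewood maximal operator $M$. Suppose $\cM^1_{\tilde q}=w\cM^{\tilde p}_{\tilde q}$ as sets. The inclusion $w\cM^{\tilde p}_{\tilde q}\hookrightarrow\cM^1_{\tilde q}$ is bounded (Theorem~\ref{thm:GHLM}), both spaces are complete, so by the open mapping theorem the identity is an isomorphism and the two quasi-norms are equivalent. On the one hand $M$ is bounded on $w\cM^{\tilde p}_{\tilde q}$ because $\tilde p>1$: this follows from the weak-type Morrey bound $M\colon\cM^1_{\tilde q}\to w\cM^1_{\tilde q}$ via the Kolmogorov truncation $f=f\chi_{\{|f|>\gamma/2\}}+f\chi_{\{|f|\le\gamma/2\}}$, the tail integral $\int_{\gamma/2}^\infty t^{-\tilde p}\,dt$ being finite precisely because $\tilde p>1$. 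On the other hand $M$ is \emph{not} bounded on $\cM^1_{\tilde q}$ --- this is the unboundedness of the maximal operator on Morrey spaces of exponent $1$ established in Section~3. Since the quasi-norms are equivalent, these two facts are incompatible. Hence $\cM^1_{\tilde q}\supsetneq w\cM^{\tilde p}_{\tilde q}$; taking $g_0$ in the difference and setting $f_0:=|g_0|^{1/p_1}$ finishes the proof.

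The main obstacle is the input from Section~3, the failure of $M$ to be bounded on $\cM^1_q$; everything surrounding it in the argument above is soft. For a proof that avoids Section~3, one can instead construct $f_0$ by hand. Take building blocks $g_\delta:=\delta^{-d/q}\sum_{j=1}^{N_\delta}\chi_{B(y_j,\delta)}$, where $N_\delta\sim\delta^{-d(1-p_1/q)}$ pairwise disjoint balls of radius $\delta$ are packed into $B(0,1)$; the crux here is the uniform bound $\sup_{0<\delta<1}\|g_\delta\|_{\cM^{p_1}_q}<\infty$, proved by a case analysis on the radius $r$ of the testing ball relative to $\delta$ and $1$ (using that such a ball meets at most $\lesssim 1+N_\delta r^d$ of the small balls), whereas testing on $B(0,1)$ gives $\|g_\delta\|_{w\cM^{p_2}_q}\gtrsim\delta^{-\frac{d}{q}(1-p_1/p_2)}\to\infty$. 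One then takes $f_0:=\sum_{n\ge1}2^{-n}g_{\delta_n}(\cdot-x_n)$ with $\delta_n\downarrow0$ fast enough and the unit balls $B(x_n,1)$ pairwise far apart: the triangle inequality (available since $p_1\ge1$) keeps $f_0$ in $\cM^{p_1}_q$, while monotonicity of the two quasi-norms under $0\le f\le g$ forces $f_0\notin w\cM^{p_2}_q$.
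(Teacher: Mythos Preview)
Your reduction of all three items to the single claim $\cM^{p_1}_q\setminus w\cM^{p_2}_q\neq\varnothing$ is exactly the logical structure the paper uses as well. Where you depart from the paper is in \emph{how} you produce the witness $f_0$, and there both of your routes differ from what the paper does.

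The paper's argument is fully elementary and constructive: it chooses $\beta\in\bigl(\tfrac{dp_1}{q},\tfrac{dp_2}{q}\bigr)$ and builds a $\{0,1\}$-valued function $f=\chi_{B(0,1)}+\sum_{k\ge1}\chi_{B(0,r_k)\setminus B(0,k)}$, with the radii $r_k\in(k,k+1)$ chosen so that $\int_{B(0,R)}f\sim R^{d-\beta}$ for large $R$. One then checks directly that $f\in\cM^{p_1}_q\setminus\cM^{p_2}_q$. The key payoff of building a characteristic function is that $\chi_{\{|f|>\gamma\}}$ equals either $0$ or $f$, so $\|f\|_{w\cM^{p_2}_q}=\|f\|_{\cM^{p_2}_q}=\infty$, and parts (ii) and (iii) fall out for free. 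No maximal operator and no functional analysis are needed for Theorem~\ref{thm-300716}; the paper reserves the $M$-based contradiction argument for Theorem~\ref{thm-300516-1} only.

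Your main route, by contrast, is non-constructive and imports the Section~3 machinery. It is an interesting idea, but the step ``$M$ is bounded on $w\cM^{\tilde p}_{\tilde q}$ via the weak-type bound $M:\cM^1_{\tilde q}\to w\cM^1_{\tilde q}$ and Kolmogorov truncation'' is underjustified as written. If one applies that weak-type bound globally to $f_1=f\chi_{\{|f|>\gamma/2\}}$, one needs $\|f_1\|_{\cM^1_{\tilde q}}<\infty$; but for $f\in w\cM^{\tilde p}_{\tilde q}$ the layer-cake estimate only yields $\int_{B'}|f_1|\lesssim\gamma^{1-\tilde p}|B'|^{1-\tilde p/\tilde q}\|f\|_{w\cM^{\tilde p}_{\tilde q}}^{\tilde p}$, and the supremum over $B'$ of $|B'|^{1/\tilde q-1}\cdot|B'|^{1-\tilde p/\tilde q}=|B'|^{(1-\tilde p)/\tilde q}$ is infinite. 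The statement $M:w\cM^{\tilde p}_{\tilde q}\to w\cM^{\tilde p}_{\tilde q}$ is in fact true, but it requires in addition the Chiarenza--Frasca localization (split $f$ into $f\chi_{2B}$ and $f\chi_{(2B)^c}$ relative to the testing ball $B$, bound the far part by $\|f\|_{\cM^1_{\tilde q}}\,r^{-d/\tilde q}$, and only then run Kolmogorov on the near part using the classical weak $(1,1)$ bound); without that ingredient the sketch does not close.

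Your alternative packed-balls construction is correct and in the same constructive spirit as the paper, though with different building blocks; the paper's annulus construction is a bit more economical, and its use of a $\{0,1\}$-valued $f$ sidesteps the need to assemble an infinite series with separated supports.
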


\bigskip

\begin{thm}\label{thm-300516-1}
Let $1\le p\le q<\infty$. Then the inclusion $\cM^p_q \subseteq w\cM^p_q$ is proper.
\end{thm}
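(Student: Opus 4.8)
The plan is to reduce to the case $p=1$. For any measurable $g$ and any ball $B$ we have $|B|^{1/q}\bigl(|B|^{-1}\int_B|g|^p\bigr)^{1/p}=\bigl(|B|^{p/q-1}\int_B|g|^p\bigr)^{1/p}$, hence $\|g\|_{\cM^p_q}^p=\bigl\||g|^p\bigr\|_{\cM^1_{q/p}}$; applying this identity first to $g=\chi_E$ and then to the level sets $\{|g|>\gamma\}$ (and substituting $\gamma^p$ for the threshold) gives in the same way $\|g\|_{w\cM^p_q}^p=\bigl\||g|^p\bigr\|_{w\cM^1_{q/p}}$. Thus $g\in\cM^p_q$ (respectively $g\in w\cM^p_q$) if and only if $|g|^p\in\cM^1_{q/p}$ (respectively $\in w\cM^1_{q/p}$), and $1\le q/p<\infty$. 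It therefore suffices to produce, for each $1\le s<\infty$, a nonnegative $F\in w\cM^1_s\setminus\cM^1_s$: with $s=q/p$ the function $F^{1/p}$ then witnesses the properness of $\cM^p_q\subseteq w\cM^p_q$. For $s=1$ this is classical, e.g.\ $F(x)=|x|^{-d}\chi_{B(0,1)}(x)\in wL^1\setminus L^1$; equivalently, for $p=q$ one may take $|x|^{-d/p}\in w\cM^p_p\setminus\cM^p_p$ directly. So assume $s>1$ from now on.

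Such an $F$ must be non-radial: if $g=\phi(|\cdot|)$ is radially decreasing then $\|\gamma\chi_{\{g>\gamma\}}\|_{\cM^1_s}=\gamma\,|B(0,\phi^{-1}(\gamma))|^{1/s}$, so $g\in w\cM^1_s$ forces $\phi(t)\lesssim t^{-d/s}$, and for $s>1$ this bound already gives $g\in\cM^1_s$ (then $\int_0^r\phi(t)t^{d-1}\,dt$ converges and is $\lesssim r^{d-d/s}$, which is exactly what the Morrey condition needs); hence $\cM^1_s$ and $w\cM^1_s$ coincide on radially decreasing functions, and the counterexample must exploit a spread-out geometry invisible to the distribution function. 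Accordingly I would take $F=\sum_{k\ge k_0}2^k\chi_{E_k}$, where $k_0$ is a large fixed integer and each $E_k\subseteq B(0,1)$ is a union of $m_k$ pairwise disjoint balls of radius $\rho_k=2^{-ks/d}$, with $m_k\asymp 2^{k(s-1)}$ so that $|E_k|\asymp 2^{-k}$, the $m_k$ centres placed on a regular lattice of spacing $\asymp m_k^{-1/d}$ so that $E_k$ is uniformly spread through $B(0,1)$; the lattices for different $k$ are mutually offset so that all the constituent balls (over all $k\ge k_0$) are pairwise disjoint, which is possible because $\sum_k|E_k|<|B(0,1)|$. Then $\int_{B(0,1)}F=\sum_{k\ge k_0}2^k|E_k|\asymp\sum_{k\ge k_0}1=+\infty$, so $F\notin L^1_{\rm loc}$, whence $F\notin\cM^1_s$.

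It remains to check $F\in w\cM^1_s$, i.e.\ that $\gamma\,\|\chi_{\{F>\gamma\}}\|_{\cM^1_s}$ is bounded uniformly in $\gamma$. For $\gamma\in[2^{k-1},2^k)$ the superlevel set is $S_k:=\{F>\gamma\}=\bigcup_{j\ge k}E_j$, of total measure $\asymp 2^{-k}$, and the uniform distribution of the $E_j$ yields the scale-by-scale estimate
\[
|B(a,r)\cap S_k|\ \lesssim\ \min\bigl(\,r^d,\ 2^{-ks}+2^{-k}r^d\,\bigr)\qquad(a\in\R^d,\ r>0),
\]
since a ball of radius $r\le\rho_k$ meets at most one constituent ball, while for larger $r$ each $E_j$ has density $\lesssim 2^{-j}$ at scale $r$ (the geometric sum over $j\ge k$ producing the $2^{-k}r^d$ term), up to an error $\lesssim\rho_k^d=2^{-ks}$ coming from the finitely many $E_j$ whose lattice spacing exceeds $r$. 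Substituting this into $\|\chi_{S_k}\|_{\cM^1_s}=\sup_{a,r}|B(a,r)|^{1/s-1}|B(a,r)\cap S_k|$ and separating the ranges $r^d\le 2^{-ks}$, $\ 2^{-ks}<r^d<1$, and $\ r\ge 1$, one finds in each range that the quantity is $\lesssim 2^{-k}$ (the exponents cancel exactly), so $\gamma\,\|\chi_{S_k}\|_{\cM^1_s}\lesssim 2^k\cdot 2^{-k}\asymp 1$, and $F\in w\cM^1_s$.

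The main obstacle is this last verification, and specifically the design of the $E_k$: one must simultaneously keep their constituent balls globally pairwise disjoint and guarantee that every ball meets $\bigcup_{j\ge k}E_j$ in measure at most $\lesssim 2^{-ks}+2^{-k}r^d$, since this is precisely the budget that holds the Morrey quasi-norm of each level set at size $\asymp 2^{-k}$; everything else is bookkeeping with the defining supremum. (The same mechanism underlies the route announced in the abstract: one may instead take $F=Mf$ for a suitably chosen $f\in\cM^1_s$ that fails to lie in $L\log L$ locally, so that $F\notin L^1_{\rm loc}$ while $F\in w\cM^1_s$, using that the Hardy--Littlewood maximal operator maps $\cM^1_s$ into $w\cM^1_s$ but not into $\cM^1_s$; constructing such an $f$ --- again necessarily non-radial --- is then the crux.)
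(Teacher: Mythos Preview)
Your reduction to $p=1$ via the identity $\|g\|_{\cM^p_q}^p=\||g|^p\|_{\cM^1_{q/p}}$ is exactly the paper's Lemma~3.1, and the case $p=q$ is handled the same way. Where you diverge is in the heart of the matter, showing $w\cM^1_s\setminus\cM^1_s\neq\emptyset$ for $s>1$: the paper does \emph{not} construct an explicit witness but argues indirectly---assuming $\cM^1_s=w\cM^1_s$, the Closed Graph Theorem would make the inclusion continuous, and composing with the known weak-type bound $M:\cM^1_s\to w\cM^1_s$ would force $M$ to be bounded on $\cM^1_s$, contradicting a result of Nakai. Your route is the explicit construction $F=\sum_k 2^k\chi_{E_k}$ with carefully spread $E_k$, which is self-contained and actually produces a function, at the cost of the packing-counting bookkeeping you outline; the paper's route is a few lines but imports both the completeness of (weak) Morrey spaces and an external unboundedness theorem. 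Your parenthetical remark at the end correctly identifies the paper's mechanism.

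Your argument is essentially sound, but two points deserve tightening. First, the claim that the $E_k$ can be made pairwise disjoint across $k$ ``because $\sum_k|E_k|<|B(0,1)|$'' is not a proof---a measure inequality does not by itself let you offset lattices while preserving their regularity. Fortunately you do not need cross-$k$ disjointness at all: linearity of the integral already gives $\int_{B(0,1)}F=\sum_k 2^k|E_k|=\infty$, and for the level sets one has $\{F>2^{k-1}\}\subseteq\bigcup_{j\ge k-1}E_j$ regardless of overlaps, which is enough for your Morrey bound. Second, in the range $\rho_k<r<1$ the term $r^{d/s-d}\cdot 2^{-k}r^d=2^{-k}r^{d/s}$ is increasing in $r$, so the estimate only closes because $E_k\subseteq B(0,1)$ caps $r$ at order $1$; you use this implicitly when you split at $r=1$, but it should be said. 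Your observation that radially decreasing functions cannot distinguish $\cM^1_s$ from $w\cM^1_s$ when $s>1$ is correct and matches the paper's Proposition~3.4.
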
	

\bigskip

\begin{rem}
The claim about the proper inclusion $\cM^{p_2}_q \subseteq  \cM^{p_1}_q$
is stated in \cite[p. 2]{GHLM} without proof. We shall see the detailed
explanation of this claim in the proof of Theorem \ref{thm-300716}(i).
In \cite[Remark 2.4]{GHLM}, the authors refer to \cite{GHSS} for the proper
inclusion  between the generalized Morrey space $L^{1, \phi}$ and the
corresponding weak type space $wL^{1.\phi}$, where $\phi(t)=t^{-1}\log(3+t)$.
Since $L^{1,\phi} \neq \mathcal{M}^1_q$ for this choice of $\phi$, Theorem
\ref{thm-300516-1} can be seen as a complement of the result in \cite{GHSS}.
\end{rem}

We also obtain the following necessary conditions for inclusion of Morrey spaces
and weak Morrey spaces which can be seen as a refinement of some necessary
conditions given in \cite{GHLM}.

\bigskip

\begin{thm}\label{thm-3216-1}
Let $1\le p_i\le q_i<\infty$ for $i=1, 2$. Then
the following implications hold:
\begin{enumerate}
\item[{\rm (i)}]  $\cM^{p_2}_{q_2} \subseteq \cM^{p_1}_{q_1}$ implies
$q_1=q_2$ and $p_1\le p_2$;
\item[{\rm (ii)}] $w\cM^{p_2}_{q_2} \subseteq w\cM^{p_1}_{q_1}$ implies
$q_1=q_2$ and $p_1\le p_2$;
\item[{\rm (iii)}] $w\cM^{p_2}_{q_2} \subseteq \cM^{p_1}_{q_1}$  implies
$q_1=q_2$ and $p_1<p_2$.
\end{enumerate}
\end{thm}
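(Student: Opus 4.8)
The plan is to reduce all three implications to two ingredients. First, the set‑theoretic inclusion hypothesized in (i)--(iii) automatically upgrades to a bounded inclusion, i.e.\ to a norm inequality. Second, the (quasi‑)norms of Morrey and weak Morrey spaces scale in a definite way under dilations. The norm inequality together with the scaling will force $q_1=q_2$, and the comparisons between the first indices will then follow by combining the inclusions already recorded in the introduction (H\"older's inequality in the strong case, Theorem \ref{thm:GHLM} in the weak case) with the properness statements in Theorems \ref{thm-300716} and \ref{thm-300516-1}.

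First I would establish automatic continuity. Writing $(X,Y)$ for the relevant pair of spaces in (i), (ii), or (iii), recall that $\cM^p_q$ is a Banach space and $w\cM^p_q$ is a complete quasi‑normed space. For any ball $B$ one has $\|f\chi_B\|_{L^p(B)}\le|B|^{\frac1p-\frac1q}\|f\|_{\cM^p_q}$ and, in the weak case, $\sup_{\gamma>0}\gamma\,|\{|f|>\gamma\}\cap B|^{1/p}\le|B|^{\frac1p-\frac1q}\|f\|_{w\cM^p_q}$; hence convergence in the (quasi‑)norm of $X$ or of $Y$ implies convergence in measure on every ball, and so a.e.\ convergence along a subsequence. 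This makes the graph of the inclusion $X\hookrightarrow Y$ closed, and the closed graph theorem then provides a constant $C>0$ with $\|f\|_Y\le C\|f\|_X$ for every $f\in X$.

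Next I would insert dilations. For $t>0$ and $f_t(x):=f(tx)$, a change of variables gives $\|f_t\|_{\cM^p_q}=t^{-d/q}\|f\|_{\cM^p_q}$ and likewise $\|f_t\|_{w\cM^p_q}=t^{-d/q}\|f\|_{w\cM^p_q}$. Applying the inequality from the previous step to $f_t$ in place of $f$ and rearranging yields
\[
\|f\|_Y\le C\,t^{\frac{d}{q_1}-\frac{d}{q_2}}\,\|f\|_X\qquad(t>0).
\]
Taking $f=\chi_{B(0,1)}$, which lies in every Morrey and weak Morrey space with nonzero (quasi‑)norm, and letting $t\to0^+$ if $q_1<q_2$ or $t\to\infty$ if $q_1>q_2$ would force $\|f\|_Y=0$, a contradiction; hence $q_1=q_2=:q$.

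Finally I would pin down the first indices. For (i), if $p_1>p_2$ then $\cM^{p_1}_q\subseteq\cM^{p_2}_q$ by H\"older, so the hypothesis gives $\cM^{p_1}_q=\cM^{p_2}_q$; choosing $\widetilde p\in(p_2,p_1)$ (so $p_2<\widetilde p<q$), Theorem \ref{thm-300716}(i) supplies $g\in\cM^{p_2}_q\setminus\cM^{\widetilde p}_q$, which contradicts $g\in\cM^{p_2}_q=\cM^{p_1}_q\subseteq\cM^{\widetilde p}_q$. Hence $p_1\le p_2$. Part (ii) runs in exactly the same way, with Theorem \ref{thm:GHLM} replacing H\"older's inequality and Theorem \ref{thm-300716}(iii) replacing Theorem \ref{thm-300716}(i). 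For (iii), if $p_1\ge p_2$ then the chain $\cM^{p_1}_q\subseteq\cM^{p_2}_q\subseteq w\cM^{p_2}_q\subseteq\cM^{p_1}_q$ (H\"older, the trivial inclusion, and the hypothesis) forces $\cM^{p_2}_q=w\cM^{p_2}_q$, contradicting Theorem \ref{thm-300516-1}; hence $p_1<p_2$. I expect the only delicate point to be the automatic‑continuity step, in particular verifying completeness of the quasi‑Banach spaces $w\cM^p_q$ and that their quasi‑norm dominates the displayed local weak‑Lebesgue quantity, which is what closes the graph of the inclusion. Everything after that is bookkeeping with dilations and the previously listed inclusions.
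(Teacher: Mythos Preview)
Your argument is correct, and it deviates from the paper's proof in two places worth noting. For the equality $q_1=q_2$, you use dilation invariance of the (weak) Morrey norms, whereas the paper simply tests the inclusion on $\chi_{B(0,r)}$; these are essentially the same computation since $\chi_{B(0,r)}$ is a dilation of $\chi_{B(0,1)}$. The real difference is in the derivation of the relation between $p_1$ and $p_2$: in parts (i)--(ii) the paper introduces a new explicit family $f=\chi_{\{|x|<1\}}+\sum_{j=1}^K\chi_{\{j\le|x|\le j+j^{-\epsilon}\}}$ and computes its Morrey norms in both spaces to force $(K+K^{-\epsilon})^{\epsilon/p_2-\epsilon/p_1}\le C$, while you argue by contradiction, recycling the already proved proper inclusions of Theorem~\ref{thm-300716} (and, for (iii), Theorem~\ref{thm-300516-1}). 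Your route is shorter and avoids a second construction, at the cost of making (i)--(ii) logically depend on Section~2; the paper's test family makes the proof of (i) self-contained. For (iii) the two arguments are essentially the same, both reducing to Theorem~\ref{thm-300516-1}; the paper first passes through part (i) to get $p_1\le p_2$ and then rules out equality, whereas you handle the full case $p_1\ge p_2$ in one chain. Your closed-graph step (including completeness of $w\cM^p_q$) is left implicit in the paper as well, so both proofs share that mild lacuna.
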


\bigskip

\begin{rem}
A necessary and sufficient condition for inclusion of Morrey spaces on a bounded
domain can be found in \cite[Theorem 2.1]{P} and \cite{P2}.
The case of Morrey spaces on $\R^d$
is mentioned in \cite[Eq. (3.9)]{HS} and the authors refer to \cite[Satz 1.6]{R}.
However, we do not have the access to the paper, so that we do not know how the
proof goes. See also \cite[Corollary 3.14]{HS} for weighted version
of Theorem \ref{thm-3216-1}. Here we present a proof of the necessary and sufficient condition
for the inclusion property, which is different from and simpler than that in \cite{P}.
\end{rem}

The organization of this paper is as follows. In the next section, we prove that
for $1\le p_1<p_2<q<\infty$ the set $\cM^{p_1}_q \setminus \cM^{p_2}_q$ is not empty.
By the same example, we also show that for $1\le p_1<p_2<q$ the inclusion
$w\cM^{p_2}_q \subseteq w\cM^{p_1}_q$ is proper. In Section 3, we give the proof
of Theorem \ref{thm-300516-1} using the unboundedness of the Hardy-Littlewood
maximal operator on Morrey spaces of exponent 1. The proof of Theorem
\ref{thm-3216-1} is given in the last section. Throughout this paper, we denote
by $C$ a positive constant which is independent of the function $f$ and its value
may be different from line to line.

\section{The proof of Theorem \ref{thm-300716}}

We shall first prove Theorem \ref{thm-300716} (i) by constructing a function
which belongs to $\cM^{p_1}_q$ but not to $\cM^{p_2}_q$, for $1\le p_1 <p_2<q<\infty$.

\begin{proof}[Proof of Theorem \ref{thm-300716} (i)]
Let $1\le p_1<p_2<q<\infty$ and $\beta:=\frac{d(p_1+p_2)}{2q}$. Then we have
\begin{align}\label{eq:1}
\frac{dp_1}{q}<\beta<\frac{dp_2}{q}
\end{align}
and
\begin{align*}
d-\beta=\frac{d(q-p_1)+d(q-p_2)}{2q}>0.
\end{align*}
Define $g(x):=\chi_{B(0,1)}(x)+\chi_{\mathbb{R}^n \setminus B(0,1)}(x)|x|^{-\beta}$.
Then, for each $k\in \mathbb{N}$, we choose $r_k\in (k,k+1)$ such that
\[
\int_{B(0,k+1)\setminus B(0,k)}
g(x) \ dx
= |B(0,r_k)\setminus B(0,k)|.
\]
Next define
\begin{align}\label{eq:300716-1}
f(x):=\chi_{B(0,1)}(x)
+
\sum_{k=1}^{\infty}
\chi_{B(0,r_k)\setminus B(0,k)}(x).
\end{align}
We shall show that $f\in \cM^{p_1}_q\setminus \cM^{p_2}_q$.
First observe that
\[
\int_{B(a,r)} |f(x)|^p dx \le
\int_{B(0,r)} |f(x)|^p\,dx
\]
for every $1\le p<\infty,\ a\in\mathbb{R}^d,\ r>0$.
Now, for $1\le p<\infty$ and $r>2$, we have
\begin{align*}
\int_{B(0,r)}
|f(x)|^{p} \ dx
=
\int_{B(0,r)}
|f(x)| \ dx
\le
\int_{B(0,2r)}
g(x) \ dx,
\end{align*}
so
\begin{align}\label{eq:201016-1}
\int_{B(0,r)}
|f(x)|^{p} \ dx
\le \int_{B(0,2r)}
|x|^{-\beta} \ dx =Cr^{d-\beta}
\end{align}
and
\begin{align}\label{eq:201016-2}
\int_{B(0,r)}
|f(x)|^{p} \ dx
\ge \int_{B(0,r)\setminus B(0,1)}
|x|^{-\beta} \ dx =C(r^{d-\beta}-1)
\ge C \left(1-\frac{1}{2^{d-\beta}}\right)
r^{d-\beta}.
\end{align}
Therefore, by substituting $p=p_1$ into \eqref{eq:201016-1} and recalling
\eqref{eq:1}, we have
\begin{align}\label{eq:161102-1}
|B(0,r)|^{\frac{1}{q}-\frac{1}{p_1}}
\left(
\int_{B(0,r)} |f(x)|^{p_1} \ dx
\right)^{\frac{1}{p_1}}
\le C r^{\frac{d}{q}-\frac{d}{p_1}}
r^{\frac{d}{p_1}-\frac{\beta}{p_1}}
=C r^{\frac{d}{q}-\frac{\beta}{p_1}}\le C.
\end{align}
On the other hand, for each $r\le 2$, we have
\begin{align}\label{eq:3}
|B(0,r)|^{\frac{1}{q}-\frac{1}{p_1}}
\left(
\int_{B(0,r)} |f(x)|^{p_1} \ dx
\right)^{\frac{1}{p_1}}
\le C
r^{\frac{d}{q}-\frac{d}{p_1}}
\left(
\int_{B(0,r)} |f(x)|^{p_1}  dx
\right)^{\frac{1}{p_1}}
\le C
r^{\frac{d}{q}}\le C.
\end{align}
By combining \eqref{eq:161102-1} and \eqref{eq:3}
we conclude that $f\in \cM^{p_1}_q$.

Meanwhile, by substituting $p=p_2$ into \eqref{eq:201016-2}, we have
\begin{align*}
|B(0,r)|^{\frac{1}{q}-\frac{1}{p_2}}
\left(
\int_{B(0,r)}
|f(x)|^{p_2} \ dx
\right)^{\frac{1}{p_2}}
\ge C
r^{\frac{d}{q}-\frac{d}{p_2}}
r^{\frac{d-\beta}{p_2}}
=C
r^{\frac{d}{q}-\frac{\beta}{p_2}}.
\end{align*}
Since $\frac{d}{q}-\frac{\beta}{p_2}>0$, we have
\begin{align*}
\sup_{a\in \mathbb{R}^d, r>0}
|B(a,r)|^{\frac{1}{q}-\frac{1}{p_2}}
\left(
\int_{B(a,r)}
|f(x)|^{p_2} \ dx
\right)^{\frac{1}{p_2}}
&\ge C
\sup_{r> 2}
|B(0,r)|^{\frac{1}{q}-\frac{1}{p_2}}
\left(
\int_{B(0,r)}
|f(x)|^{p_2} \ dx
\right)^{\frac{1}{p_2}}\\
&\ge C
\sup_{r> 2}
r^{\frac{d}{q}-\frac{\beta}{p_2}}=
\infty.
\end{align*}
Thus $f\notin \cM^{p_2}_q$, and we are done.
\end{proof}

Theorem \ref{thm-300716} (ii) and (iii) are proved by using the function $f$
from the proof of Theorem \ref{thm-300716} (i) and its relation with the
characteristic function of its level set. The detailed proof goes as follows:

\begin{proof}[Proof of Theorem \ref{thm-300716} (ii)-(iii)]
For $1\le p_1<p_2<q<\infty$, let $f$ be defined by \eqref{eq:300716-1}.
Observe that
\begin{align*}
\chi_{\{|f|>\gamma\}}
=
\begin{cases}
0, \quad &\gamma\ge 1,\\
f, \quad &\gamma \in (0,1).
\end{cases}
\end{align*}
This together with the fact that $f\notin \cM^{p_2}_q$ gives
\begin{align*}
\|f\|_{w\cM^{p_2}_q}
=
\sup_{\gamma \in (0,1)}
\gamma\|\chi_{\{|f|>\gamma\}}\|_{\cM^{p_2}_q}
=
\sup_{\gamma \in (0,1)}
\gamma\|f\|_{\cM^{p_2}_q}
=
\|f\|_{\cM^{p_2}_q}=\infty,
\end{align*}
and hence $f\in \cM^{p_1}_q\setminus w\cM^{p_2}_q$. Thus we have
shown that $w\cM^{p_2}_q\subseteq  \cM^{p_1}_q$ is a proper inclusion.
Since $\cM^{p_1}_q\subseteq w\cM^{p_1}_q$, we  also have
$f\in w\cM^{p_1}_q\setminus w\cM^{p_2}_q$, so the inclusion (iii) is proper.
\end{proof}

\section{The proof of Theorem \ref{thm-300516-1}}

In order to prove Theorem \ref{thm-300516-1}, we need the following lemma.

\bigskip

\begin{lem}\label{lem:20416-1}
Let $1\le p\le q<\infty$. Then
\[
\|f\|_{\cM^{p}_q}
=\||f|^p\|_{\cM^{1}_{\frac{q}{p}}}^{\frac{1}{p}}
\]
for every $f\in \cM^p_q$ and
\[
\|f\|_{w\cM^{p}_q}
=\||f|^p\|_{w\cM^{1}_{\frac{q}{p}}}^{\frac{1}{p}}
\]
for every $f\in w\cM^p_q$.
\end{lem}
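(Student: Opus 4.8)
This is essentially a change-of-variables identity, so I would prove the two equalities by straightforward manipulation of the defining (quasi-)norms, handling the strong case first and then reducing the weak case to it. For the first identity, the idea is simply to raise everything to the $p$-th power inside the supremum. Starting from
\[
\|f\|_{\cM^p_q}^p
=\sup_{a\in\R^d,\ r>0}|B(a,r)|^{\frac{p}{q}}\left(\frac{1}{|B(a,r)|}\int_{B(a,r)}|f(y)|^p\,dy\right),
\]
I would observe that $\frac{p}{q}=\frac{1}{q/p}$, so the right-hand side is exactly
\[
\sup_{a\in\R^d,\ r>0}|B(a,r)|^{\frac{1}{q/p}}\left(\frac{1}{|B(a,r)|}\int_{B(a,r)}\big||f(y)|^p\big|\,dy\right)=\big\||f|^p\big\|_{\cM^1_{q/p}}.
\]
Taking $p$-th roots gives the claim. (The hypothesis $1\le p\le q<\infty$ guarantees $1\le q/p<\infty$, so $\cM^1_{q/p}$ is a legitimate Morrey space; and $f\in\cM^p_q$ forces $|f|^p\in\cM^1_{q/p}$, so the statement is not vacuous.)

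For the weak identity, the plan is to reduce to the strong identity by the elementary observation that, for $\gamma>0$, the level sets match up: $\{|f|>\gamma\}=\{|f|^p>\gamma^p\}$. Hence
\[
\big\|\gamma\chi_{\{|f|>\gamma\}}\big\|_{\cM^p_q}
=\gamma\big\|\chi_{\{|f|^p>\gamma^p\}}\big\|_{\cM^p_q}
=\gamma\big(\big\|\chi_{\{|f|^p>\gamma^p\}}\big\|_{\cM^1_{q/p}}\big)^{1/p}
=\Big(\gamma^p\big\|\chi_{\{|f|^p>\gamma^p\}}\big\|_{\cM^1_{q/p}}\Big)^{1/p},
\]
using the first identity (with $f$ replaced by the characteristic function, for which $|\,\cdot\,|^p=|\,\cdot\,|$) in the middle step. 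Now I would reparametrize the supremum over $\gamma>0$ by $\lambda:=\gamma^p$, which is a bijection of $(0,\infty)$ onto itself, and take the supremum:
\[
\|f\|_{w\cM^p_q}
=\sup_{\gamma>0}\big\|\gamma\chi_{\{|f|>\gamma\}}\big\|_{\cM^p_q}
=\Big(\sup_{\lambda>0}\lambda\big\|\chi_{\{|f|^p>\lambda\}}\big\|_{\cM^1_{q/p}}\Big)^{1/p}
=\big\||f|^p\big\|_{w\cM^1_{q/p}}^{1/p},
\]
where pulling the monotone map $t\mapsto t^{1/p}$ through the supremum is legitimate. Taking the sup inside requires the monotonicity of $t \mapsto t^{1/p}$ on $[0,\infty)$, which is immediate.

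I do not anticipate a genuine obstacle here; the lemma is a bookkeeping identity and the only things to be careful about are (a) matching the exponent $p/q$ with $1/(q/p)$ correctly, (b) the level-set identity $\{|f|>\gamma\}=\{|f|^p>\gamma^p\}$, and (c) the substitution $\lambda=\gamma^p$ together with the fact that the monotone function $t\mapsto t^{1/p}$ commutes with suprema. If anything, the mild subtlety is purely notational: one should state at the outset that both identities are read as ``$f$ belongs to the left-hand space if and only if $|f|^p$ belongs to the right-hand space, and in that case the norms are related as stated,'' so that neither side is accidentally interpreted as $+\infty$ in a way that the other side is not — but this is automatic from the computations above, since every step is an equality of extended reals.
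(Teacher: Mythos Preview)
Your proof is correct and follows essentially the same approach as the paper: the first identity is obtained by raising the defining Morrey norm to the $p$-th power and matching $\frac{p}{q}=\frac{1}{q/p}$, and the second identity follows from the level-set relation $\{|f|^p>\lambda\}=\{|f|>\lambda^{1/p}\}$ together with the first identity applied to characteristic functions and a reparametrization of the supremum. The paper runs the weak-norm computation starting from $\||f|^p\|_{w\cM^1_{q/p}}^{1/p}$ with the substitution $\gamma\mapsto\gamma^{1/p}$ rather than your $\lambda=\gamma^p$, but this is the same argument in the opposite direction.
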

\begin{proof}
We calculate
\[
\|f\|_{\cM^p_q}
=
\sup_{B}
\left(
|B|^{\frac{p}{q}-1}
\int_{B} |f(x)|^p \ dx
\right)^{\frac{1}{p}}
=
\||f|^p\|_{\cM^1_{\frac{q}{p}}}^{\frac{1}{p}}.
\]
By applying the first identity for $\chi_{\{|f|> \gamma^{\frac{1}{p}} \}}$, we have
\[
\||f|^p\|_{w\cM^1_{\frac{q}{p}}}^{\frac{1}{p}}
=
\sup_{\gamma>0}
\gamma^{\frac{1}{p}}
\|\chi_{\{|f|^p>\gamma \}}\|_{\cM^{1}_{\frac{q}{p}}}^{\frac{1}{p}}
=
\sup_{\gamma>0}
\gamma^{\frac{1}{p}}
\|\chi_{\{|f|>\gamma^{\frac{1}{p}} \}}\|_{\cM^{p}_q}
=\|f\|_{w\cM^p_q},
\]
as desired.
\end{proof}

We also use the following fact about the unboundedness of the
Hardy-Littlewood maximal operator $M$ on Morrey spaces of exponent 1.
The operator $M$ maps a locally integrable function $f$ to $Mf$ which
is given by
\[
Mf(x):=\sup_{r>0} \frac{1}{|B(x,r)|}\int_{B(x,r)} |f(y)|\,dy,\quad x\in
\R^d.
\]

\bigskip

\begin{lem}\label{lem:M}
The Hardy-Littlewood maximal operator $M$ is not bounded on the Morrey
space $\cM^1_q$ for $1<q<\infty$.
\end{lem}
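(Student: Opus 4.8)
The plan is to exhibit a function $f \in \cM^1_q$ whose maximal function $Mf$ fails to lie in $\cM^1_q$. The natural candidate is the power function, or rather a truncation of it. Recall from the introduction that $f(x) := |x|^{-d/q} \in \cM^{p}_q$ for every $1 \le p < q$; in particular $f \in \cM^1_q$. The key computation is then to estimate $Mf$ from below. For $|x|$ small, the average of $|y|^{-d/q}$ over $B(0,|x|)$ is comparable to $|x|^{-d/q}$, so $Mf(x) \ge c|x|^{-d/q}$ near the origin; this alone would only reproduce $f$ itself. The real point is the behaviour coming from large balls: averaging $f$ over $B(0,R)$ for large $R$ gives a quantity like $R^{-d/q}$ times a constant, so $Mf(x) \ge c\,R^{-d/q}$ for all $x \in B(0,R)$, which only says $Mf$ is bounded below by a constant on each ball — not yet a contradiction either. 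So a plain power function will not suffice, and one must instead take $f$ supported on a ball, say $f = \chi_{B(0,1)}|x|^{-d/q}$ (or even add up dyadic pieces), and compute $Mf(x)$ for $|x|$ large: the average of $f$ over $B(x, |x|)$ (a ball through the origin) is of order $|x|^{-d}\int_{B(0,1)}|y|^{-d/q}\,dy \approx |x|^{-d}$, so $Mf(x) \gtrsim |x|^{-d}$ for $|x| > 2$, which is too mild. Hence one should instead choose the borderline function that is in $\cM^1_q$ but whose maximal function just barely escapes — and the cleanest route is to use Lemma \ref{lem:20416-1} together with the known failure of the weak-$(1,1)$ endpoint behaviour.

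A cleaner and more robust approach, which I would actually carry out, is the following. Suppose for contradiction that $M$ is bounded on $\cM^1_q$. Take $f = \chi_{B(0,1)}$, which is certainly in $\cM^1_q$. Then for $|x| \ge 2$ we have $B(0,1) \subseteq B(x, 2|x|)$, so
\[
Mf(x) \ge \frac{1}{|B(x,2|x|)|}\int_{B(x,2|x|)} \chi_{B(0,1)}(y)\,dy = \frac{|B(0,1)|}{|B(0,2|x|)|} = \frac{c_d}{|x|^d},
\]
and near the origin $Mf(x) \ge c > 0$. Thus $Mf(x) \ge c\,(1+|x|)^{-d}$ on all of $\R^d$. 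Now I would test the $\cM^1_q$ quasi-norm of this lower bound on balls $B(0,R)$ with $R$ large:
\[
\|Mf\|_{\cM^1_q} \ge |B(0,R)|^{\frac{1}{q}-1}\int_{B(0,R)} Mf(x)\,dx \ge c\,R^{\frac{d}{q}-d}\int_{1 < |x| < R} |x|^{-d}\,dx \ge c\,R^{\frac{d}{q}-d}\log R,
\]
using that $\int_{1<|x|<R}|x|^{-d}\,dx = c_d \log R$. Since $\frac{d}{q} - d < 0$, the factor $R^{\frac{d}{q}-d}\log R \to 0$ as $R \to \infty$, so this does not diverge and gives no contradiction — confirming that $\chi_{B(0,1)}$ is still too tame.

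The main obstacle, then, is choosing a function at precisely the right scale, and the fix is to \emph{sum dyadic translates}. The plan is to set $f := \sum_{k=1}^{\infty} a_k\, \chi_{B(x_k, 1)}$ where $|x_k| \approx 2^k$ are widely separated centres and the coefficients $a_k$ are chosen so that $f \in \cM^1_q$ (this forces, roughly, $\sum_k a_k$-type control weighted by scale, giving $a_k \lesssim 2^{-kd/q}$ essentially, or with a small logarithmic gain one stays in $\cM^1_q$) but so that the tails overlap: for $x$ with $|x| \approx 2^N$, each far bump at $x_k$ contributes $\gtrsim a_k\, 2^{-\max(k,N)d}$ to $Mf(x)$, and summing these contributions over $k \le N$ produces an extra logarithmic factor in $N$ that destroys membership in $\cM^1_q$. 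Concretely I would verify (i) $f \in \cM^1_q$ by checking the quasi-norm on balls of every dyadic radius — the worst case being a ball containing the first $N$ bumps, where the bound is $\lesssim \sup_N N^{-1}\big(\sum_{k\le N} \text{(contribution)}\big) < \infty$ once $a_k$ is tuned; and (ii) $Mf \notin \cM^1_q$ by the reverse estimate on $B(0, 2^N)$, which picks up the $\log$-divergence. The delicate part is balancing the coefficients $a_k$ so that (i) holds with room to spare while (ii) still fails; this is where the sharp endpoint nature of $\cM^1_q$ enters, exactly as in the classical fact that $M$ is not bounded on $L^1$. Once the construction is calibrated, the contradiction with the assumed boundedness of $M$ on $\cM^1_q$ is immediate, proving the lemma.
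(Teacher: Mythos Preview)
The paper does not give a self-contained proof of this lemma: it simply invokes \cite[Corollary~5.3]{N}, which characterises boundedness of $M$ on Orlicz--Morrey spaces $L^{(\Phi,\phi)}$ by the $\nabla_2$ condition on $\Phi$, and observes that $\Phi(t)=t\notin\nabla_2$. Your attempt at a direct, constructive argument is therefore a genuinely different route, and the opening part --- correctly ruling out $|x|^{-d/q}$ and $\chi_{B(0,1)}$ as counterexamples --- is sound and useful.

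The proposed dyadic construction, however, has a real gap, not merely missing detail. First, the phrase ``summing these contributions over $k\le N$'' for $Mf(x)$ at a \emph{single} point $x$ is not legitimate: $Mf(x)$ is a supremum over balls, and separate lower bounds do not add. Second, and more seriously, exponential spacing $|x_k|\approx 2^k$ cannot be tuned to work. With unit radii and bounded $a_k$ one gets $\int_{B(0,2^N)}Mf\lesssim N^2$, so the Morrey quotient $2^{N(d/q-d)}\int_{B(0,2^N)}Mf\to 0$. Allowing variable radii $\rho_k$ and saturating the local constraint $a_k\rho_k^{d/q}\le C$, one finds $\log(2^k/\rho_k)\approx\tfrac{1}{d(1-1/q)}\log(2^{kd(1-1/q)}/b_k)$ with $b_k=a_k\rho_k^d$; since $c\mapsto c\log(1/c)$ is bounded on $(0,1]$, the sum $\sum_{k\le N}b_k\log(2^k/\rho_k)$ can never exceed $C\,2^{Nd(1-1/q)}$, and no divergence is possible. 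What \emph{does} work is polynomial spacing: in one dimension, $f=\sum_{k\ge 1}c_k\chi_{[k,\,k+\delta_k]}$ with $\delta_k=k^{-1/(q-1)}$, $c_k=k^{1/(q(q-1))}$ satisfies $c_k\delta_k^{1/q}=1$ and $c_k\delta_k=k^{-1/q}$, giving $f\in\cM^1_q$, while the local log-gain $\int_{k-1}^{k}Mf\gtrsim c_k\delta_k\log(1/\delta_k)\approx k^{-1/q}\log k$ forces $N^{1/q-1}\int_0^N Mf\gtrsim\log N\to\infty$.
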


\bigskip

\begin{rem}
Lemma \ref{lem:M} is a consequence of a necessary condition of the
boundedness of $M$ on generalized Orlicz-Morrey spaces given in
\cite[Corollary 5.3]{N}. The Morrey space $\cM^p_q$ in this paper
is recognized as the Orlicz-Morrey space $L^{(\Phi,\phi)}$ with
$\Phi(t)=t^{p}$ and $\phi(t)=t^{-\frac{1}{q}}$. Based on
\cite[Corollary 5.3]{N}, the maximal operator $M$ is bounded on
$L^{(\Phi,\phi)}$ if and only if $\Phi\in \nabla_2$ (that is, $\Phi(r)\le
\frac{1}{2k}\Phi(kr)$ for some $k\ge 1$). Clearly $\Phi(t)=t\notin
\nabla_2$.
\end{rem}

\medskip

Now, we are ready to prove Theorem \ref{thm-300516-1}.

\begin{proof}[Proof of Theorem \ref{thm-300516-1}]
Let $1\le p\le q$. If $p=q$, then $f(x):=|x|^{-\frac{d}{q}} \in w\cM^p_q \setminus
\cM^p_q$. So assume that $p<q$
and write $r=\frac{q}{p}$. In view of Lemma \ref{lem:20416-1}, it suffices for us
to prove that $\cM^1_r \subset w\cM^1_r$ properly.
Suppose to the contrary that $\cM^1_r=w\cM^1_r$. Since the Hardy-Littlewood
maximal operator $M$ is bounded from $\mathcal{M}^1_r$ to $w\mathcal{M}^1_r$,
we obtain
\[
\|Mg\|_{w\cM^1_r} \le C\,\|g\|_{\cM^1_r},
\]
for every $g\in \cM^1_r$. Meanwhile, by the Closed Graph Theorem, there must exist a
constant $C'>0$ such that
$$
\|Mg\|_{\cM^1_r}\le C'\|Mg\|_{w\cM^1_r}
$$
for every $g\in \cM^1_r$. Combining the two inequalities, we obtain
$$
\|Mg\|_{\cM^1_r}\le C\|g\|_{\cM^1_r}
$$
for every $g\in \cM^1_r$. This tells us that $M$ is bounded on $\cM^1_r$, which
contradicts Lemma \ref{lem:M}. Therefore, $w\cM^1_r \setminus \cM^1_r \neq \emptyset$,
as desired.
\end{proof}

To conclude this section, we write a proposition which gives us a subset of weak
Morrey spaces with norm equivalence between the Morrey norm $\|\cdot\|_{\cM^p_q}$
and the weak Morrey quasi-norm $\|\cdot\|_{w\cM^p_q}$.

\bigskip

\begin{prop}
Let $1\le p< q<\infty$. Suppose that $f$ is a positive radial decreasing function
in $w\mathcal{M}^p_q(\mathbb{R}^d)$.
Then $f\in \mathcal{M}^p_q(\mathbb{R}^d)$ with
	\[
	\|f\|_{w\mathcal{M}^p_q}
	\le
	\|f\|_{\mathcal{M}^p_q}
	\le
	\left(
	\frac{q \omega_{d-1}}{d(q-p)|B(0,1)|}
	\right)^{\frac{1}{p}}
	\|f\|_{w{\mathcal{M}}^p_q},
	\]
that is, $\|f\|_{w\cM^p_q} \sim \|f\|_{\cM^p_q}$.
\end{prop}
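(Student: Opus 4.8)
The plan is to treat the two inequalities separately. The left-hand inequality $\|f\|_{w\cM^p_q}\le\|f\|_{\cM^p_q}$ is nothing but the general inclusion $\cM^p_q\subseteq w\cM^p_q$ recorded just after the definition of the weak Morrey space, so everything reduces to showing $f\in\cM^p_q$ together with the displayed upper bound. For this I would dominate $f$ pointwise by a constant multiple of $|x|^{-d/q}$ and then compute $\||x|^{-d/q}\|_{\cM^p_q}$ explicitly; recall from the introduction that $|x|^{-d/q}\in\cM^p_q$ precisely when $p<q$, which is where the hypothesis $p<q$ will enter.

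First I would record the elementary rearrangement fact that for a nonnegative radial decreasing function $h$ and any ball $B(a,r)$ one has $\int_{B(a,r)}h\le\int_{B(0,r)}h$: writing $h=\int_0^\infty\chi_{\{h>t\}}\,dt$, each superlevel set $\{h>t\}$ is a ball centered at the origin, and $|B(a,r)\cap B(0,\rho)|\le|B(0,\min(r,\rho))|=|B(0,r)\cap B(0,\rho)|$; integrating in $t$ gives the claim. Applying this to $h=|f|^p$ (radial decreasing since $t\mapsto t^p$ is increasing) and to $h=|x|^{-dp/q}$ shows that both $\|f\|_{\cM^p_q}$ and $\||x|^{-d/q}\|_{\cM^p_q}$ may be computed as suprema over balls centered at the origin only, and that the Morrey norm is monotone in $f$.

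Next I would extract the pointwise bound. Write $f(x)=\phi(|x|)$ with $\phi$ decreasing. For $|x|=s$ and any $0<\gamma<\phi(s)$ we have $B(0,s)\subseteq\{|f|>\gamma\}$, so, taking the ball $B(0,s)$ itself in the supremum defining the Morrey norm,
\[
\|f\|_{w\cM^p_q}\ \ge\ \gamma\,\|\chi_{\{|f|>\gamma\}}\|_{\cM^p_q}\ \ge\ \gamma\,\|\chi_{B(0,s)}\|_{\cM^p_q}\ \ge\ \gamma\,|B(0,s)|^{\frac1q}.
\]
Letting $\gamma\uparrow\phi(s)$ yields $f(x)\le\|f\|_{w\cM^p_q}\,|B(0,1)|^{-\frac1q}\,|x|^{-\frac dq}$ for a.e.\ $x$; in particular $\phi(s)<\infty$ for every $s>0$.

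Finally I would compute, in polar coordinates, $\int_{B(0,r)}|x|^{-\frac{dp}{q}}\,dx=\omega_{d-1}\int_0^r s^{d-1-\frac{dp}{q}}\,ds=\frac{q\,\omega_{d-1}}{d(q-p)}\,r^{\frac{d(q-p)}{q}}$, the integral being finite exactly because $p<q$; multiplying by $|B(0,r)|^{\frac pq-1}$ the powers of $r$ cancel, giving $\||x|^{-d/q}\|_{\cM^p_q}=\bigl(\tfrac{q\,\omega_{d-1}}{d(q-p)|B(0,1)|}\bigr)^{\frac1p}|B(0,1)|^{\frac1q}$. Combining this with the pointwise bound and the monotonicity of the Morrey norm gives $f\in\cM^p_q$ and $\|f\|_{\cM^p_q}\le\bigl(\tfrac{q\,\omega_{d-1}}{d(q-p)|B(0,1)|}\bigr)^{\frac1p}\|f\|_{w\cM^p_q}$, which is the asserted estimate. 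The only delicate point is the convergence of the radial integral at the origin, which is guaranteed precisely by $p<q$; everything else is bookkeeping of the exponents of $r$ and $|B(0,1)|$ (and one may note $\omega_{d-1}=d\,|B(0,1)|$, so the constant is in fact $(q/(q-p))^{1/p}$).
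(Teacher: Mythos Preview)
Your argument is correct and follows essentially the same route as the paper: obtain the pointwise bound $f(x)\le |B(0,1)|^{-1/q}\|f\|_{w\cM^p_q}\,|x|^{-d/q}$ from the weak Morrey norm and then multiply by the explicitly computed value of $\||x|^{-d/q}\|_{\cM^p_q}$. You supply more detail than the paper (the rearrangement reduction to centered balls, the polar-coordinate computation, and the limit $\gamma\uparrow\phi(s)$ in place of the paper's direct choice $\gamma=f(x)$), but the strategy and the resulting constant are identical.
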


\begin{proof}
Recall that, since $\gamma\chi_{\{|f|>\gamma \}}\le |f|$ for
every $\gamma>0$, we have $\|f\|_{w\cM^p_q} \le \|f\|_{\cM^p_q}$.
Next, let $x\in \mathbb{R}^d$. Since
	$
	\{
	y\in B(0,|x|):
	f(y)>f(x)
	\}
	=B(0,|x|)
	$,
we have
	\begin{align*}
	f(x)&=
	\frac{f(x)|\{y \in B(0,|x|): f(y)>f(x) \}|^{\frac{1}{p}}}{|B(0,|x|)|^{\frac{1}{p}}}
	\\
	&\le
	\frac{|B(0,|x|)|^{\frac{1}{p}-\frac{1}{q}}\|f\|_{w\mathcal{M}^p_q}}{|B(0,|x|)|^{\frac{1}{p}}}	
	\\
	&=
	|B(0,1)|^{-\frac{1}{q}}
	\|f\|_{w\mathcal{M}^p_q}
	|x|^{-\frac{d}{q}}.
	\end{align*}
By combining the last estimate and
	\[
	\||x|^{-\frac{d}{q}}\|_{\mathcal{M}^p_q}=|B(0,1)|^{\frac{1}{q}}\left(
	\frac{q \omega_{d-1}}{d(q-p)|B(0,1)|}
	\right)^{\frac{1}{p}},
	\]
where $\omega_{d-1}$ is the surface area of the unit sphere $\mathbb{S}^{d-1}$, we get
	\begin{align*}
	\|f\|_{\mathcal{M}^p_q}
	\le
	(|B(0,1)|^{-\frac{1}{q}}
	\||x|^{-\frac{d}{q}}\|_{\mathcal{M}^p_q})
	\|f\|_{wM^p_q} =
	\left(
		\frac{q\omega_{d-1}}{d(q-p)|B(0,1)|}
	\right)^{\frac{1}{p}}
	\|f\|_{wM^p_q}.
	\end{align*}
Hence $\|f\|_{w\cM^p_q} \sim \|f\|_{\cM^p_q}$.
\end{proof}


\section{The proof of Theorem \ref{thm-3216-1}}

\begin{proof}[Proof of Theorem \ref{thm-3216-1} {\rm (i)}]
It follows from the inclusion $\cM^{p_2}_{q_2} \subseteq \cM^{p_1}_{q_1}$ that
\[
\|\chi_{B(0,r)}\|_{\cM^{p_1}_{q_1}}
\le C
\|\chi_{B(0,r)}\|_{\cM^{p_2}_{q_2}},
\]
for every $r>0$. Therefore
\[
r^{\frac{d}{q_1}-\frac{d}{q_2}}\le C
\]
for every $r>0$, which implies that $q_1=q_2$.
Now choose $\epsilon\in\bigl(0,\min\{\frac{dp_1}{q_1}, \frac{dp_2}{q_2}\}\bigr)$.
For $j\in\N$, define
$h_j(x):=\chi_{ \{ j\le |x|\le j+j^{-\epsilon} \}}(x)$, and for
$K\in\N$ write
$f(x):=\chi_{\{ 0\le |x|<1 \}}(x)+\sum_{j=1}^{K} h_j(x)$.
Then
\begin{align}\label{eq:9217-1}
\|f\|_{\cM^{p_1}_{q_1}}
&\ge
|B(0,K+K^{-\epsilon})|^{\frac{1}{q_1}-\frac{1}{p_1}}
\left(
\int_{B(0,K+K^{-\epsilon})}
|f(x)|^{p_1} \ dx
\right)^{\frac{1}{p_1}}
\nonumber
\\
&\ge
C (K+K^{-\epsilon})^{\frac{d}{q_1}-\frac{d}{p_1}}
(K+K^{-\epsilon})^{\frac{d}{p_1}-\frac{\epsilon}{p_1}}
=C (K+K^{-\epsilon})^{\frac{d}{q_1}-\frac{\epsilon}{p_1}}.
\end{align}
Meanwhile, for each $L\in \mathbb{N},\ L\le K$, we observe that
\[
|B(0,L+L^{-\epsilon})|^{\frac{1}{q_2}-\frac{1}{p_2}}\left(\int_{B(0,L+L^{-\epsilon})}
|f(x)|^{p_2}dx\right)^{\frac{1}{p_2}}
\le C(L+L^{-\epsilon})^{\frac{d}{q_2}-\frac{\epsilon}{p_2}}.
\]
Hence,
\begin{equation}\label{eq:9217-2}
\|f\|_{\cM^{p_2}_{q_2}}
\le C
(K+K^{-\epsilon})^{\frac{d}{q_2}-\frac{\epsilon}{p_2}}.
\end{equation}
By combining \eqref{eq:9217-1}, \eqref{eq:9217-2}, $q_1=q_2$,
and $\|f\|_{\cM^{p_1}_{q_1}}\le C\|f\|_{\cM^{p_2}_{q_2}}$, we get
\[
(K+K^{-\epsilon})^{\frac{\epsilon}{p_2}-\frac{\epsilon}{p_1}}
\le C.
\]
As this holds for every $K\in\mathbb{N}$, we conclude that $p_1\le p_2$.
\end{proof}
\begin{rem}
Note that the difference between the proof of Theorem \ref{thm-3216-1} {\rm (i)} and
\cite[Remark 3.4]{GHLM} is that we do not assume $p_1\le p_2$.
\end{rem}
\begin{proof}[Proof of Theorem \ref{thm-3216-1} {\rm (ii)}]
By arguing as in the proof of Theorem \ref{thm-3216-1} {\rm (i)} and using
the identities $\|\chi_{B(0,r)}\|_{w\cM^{p_1}_{q_1}}=|B(0,r)|^{\frac{1}{q_1}}$ and
$\|\chi_{B(0,r)}\|_{w\cM^{p_2}_{q_2}}=|B(0,r)|^{\frac{1}{q_2}}$,
we have  $q_1=q_2$.
Assume to the contrary that $p_1>p_2$. Define $f$ by \eqref{eq:300716-1}.
By a similar argument as in the proof of Theorem 1.3 (ii)-(iii),
we have $f\in w\cM^{p_2}_{q_2}$ but $f\notin w\cM^{p_1}_{q_1}$, which
contradicts $w\cM^{p_2}_{q_2}\subseteq w\cM^{p_1}_{q_1}$. Hence $p_1\le p_2$.
\end{proof}
\begin{rem}
Observe that unlike \cite[Theorem 4.4 and Remark 4.5]{GHLM}, the condition
$p_1\le p_2$ is not assumed in Theorem \ref{thm-3216-1} {\rm (ii)}.
\end{rem}
\begin{proof}[Proof of Theorem \ref{thm-3216-1} {\rm (iii)}]
Since $\cM^{p_2}_{q_2} \subseteq w\cM^{p_2}_{q_2}$, we have
$\cM^{p_2}_{q_2} \subseteq \cM^{p_1}_{q_1}$.
Therefore, by virtue of Theorem \ref{thm-3216-1} {\rm (ii)}, we have $q_1=q_2$
and $p_1\le p_2$. Now, assume to the contrary that $p_1= p_2$.
According to Theorem 1.4,
there exists $f_0 \in w\cM^{p_2}_{q_2}$ such that $f_0\notin \cM^{p_1}_{q_1}$.
This contradicts $w\cM^{p_2}_{q_2} \subseteq \cM^{p_1}_{q_1}$.
Thus $p_1<p_2$, as desired.
\end{proof}

\noindent{\bf Acknowledgement}. The first and second authors are supported by
P3MI -- ITB Program 2017. We would like to thank the referees for their useful
comments on the earlier version of this paper.

\end{document}